\title{Uniform boundedness on rational maps with automorphisms}
\author{Minsik Han}
\address{Department of Mathematics, University of Rochester, Rochester, NY 14627, USA}
\email{minsik.han@rochester.edu}
\subjclass[2020]{37P05}
\keywords{Arithmetic dynamics, Rational maps, Automorphisms, Uniform boundedness principle}
\newcommand{\bC}{\mathbb{C}}
\newcommand{\bP}{\mathbb{P}}
\newcommand{\bQ}{\mathbb{Q}}
\newcommand{\bZ}{\mathbb{Z}}
\newcommand{\cP}{\mathcal{P}}
\newcommand{\Aut}{\textup{Aut}}
\newcommand{\ord}{\textup{ord}}
\newcommand{\PrePer}{\textup{PrePer}}
\newcommand{\PGL}{\textup{PGL}}
\newtheorem{theorem}{Theorem}
\newtheorem{lemma}[theorem]{Lemma}
\newtheorem{conjecture}[theorem]{Conjecture}
\theoremstyle{remark}
\newtheorem*{remark}{Remark}
\theoremstyle{definition}
\newtheorem*{definition}{Definition}
\begin{document}

\begin{abstract}
    In this paper, we study the dynamical uniform boundedness principle over a family of rational maps with certain nontrivial automorphisms. Specifically, we consider a family of rational maps of an arbitrary degree $d\ge 2$ whose automorphism group contains the cyclic group of order $d$. We prove that a subfamily of this family satisfies the dynamical uniform boundedness principle.
\end{abstract}

\maketitle

\section{Introduction}

In 1950, Northcott \cite{northcott} proved a significant result that for every $N\ge 1$ and every number field $K$, any morphism $\phi:\bP^N(K)\to \bP^N(K)$ of degree $d\ge 2$ has a finite number of preperiodic points.
This result led to the following Dynamical Uniform Boundedness Conjecture, which is one of the most significant conjectures in arithmetic dynamics.

\begin{conjecture}[Morton-Silverman, 1994, \cite{ubc}]
Let $d\ge 2$ and $N,D\ge 1$ be integers.
Then there exists a constant $C=C(d,N,D)$ such that
\[\# \PrePer \left(\phi, \bP^N(K)\right) \le C\]
for any number field $K$ satisfying $[K:\bQ]\le D$ and any finite morphism $\phi:\bP^N(\overline{K}) \to \bP^N(\overline{K})$ of degree $d$ defined over $K$.
\end{conjecture}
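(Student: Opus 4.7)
The plan is to attack the conjecture through three classical arithmetic-dynamical techniques: reduction modulo primes of good reduction, canonical-height estimates at bad primes, and the geometry of dynatomic moduli spaces.

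\textbf{Step 1: Bounding the period.} Fix $\phi : \bP^N \to \bP^N$ of degree $d$ defined over a number field $K$ with $[K:\bQ]\le D$. After choosing a model, all but finitely many primes $\fp$ of $\cO_K$ are of good reduction, and reduction mod $\fp$ commutes with iteration, so it sends $\PrePer(\phi, \bP^N(K))$ into $\PrePer(\overline{\phi}, \bP^N(\bF_\fp))$ and is injective on $\fp$-adic residue classes. Following the Morton--Silverman cycle-length argument, I would take two primes of good reduction of smallest residue characteristic and combine the resulting $\bF_\fp$-periods of a $K$-rational cycle through a congruence argument to produce an explicit bound $n \le N_0(d,N,D)$ on the exact period of every $K$-rational periodic point.

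\textbf{Step 2: Bounding the preperiod.} For a strictly preperiodic point $P$ with tail length $m$, the $\fp$-adic distance from $\phi^i(P)$ to the terminal cycle contracts by a controlled factor at each step at a prime of good reduction. A local height computation in residue disks, combined with the period bound from Step~1, would then force $m \le M_0(d,N,D)$ uniformly in $\phi$.

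\textbf{Step 3: Bounding the total number.} Once every $K$-rational preperiodic point is known to have preperiodic type $(n,m)$ with $n\le N_0$ and $m\le M_0$, one considers for each such pair the dynatomic locus $\PrePer_{n,m}(\phi)\subset \bP^N$ cut out by $\phi^{n+m}(P)=\phi^m(P)$ after stripping shorter types, and then bounds $\#\PrePer_{n,m}(\phi)(K)$ uniformly as $\phi$ varies.

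\textbf{Main obstacle.} The substantive difficulty lies entirely in Step~3. Although $\PrePer_{n,m}(\phi)$ is zero-dimensional for generic $\phi$, its degree grows rapidly with $(n,m)$, so Bezout gives no uniform bound. A genuine uniform bound requires passing to the \emph{relative} dynatomic variety $\cPP_{n,m}\to \cM_d^N$ over the moduli space of degree-$d$ endomorphisms and showing that its $K$-points of degree $\le D$ are uniformly controlled, in direct analogy with Mazur's uniform boundedness theorem for torsion on elliptic curves. Producing a geometric invariant of $\cPP_{n,m}$ (Kodaira dimension, gonality, or a Faltings-type rigidity) that grows with $(n,m)$ and forces such control is, so far as I know, the missing ingredient; this is where I expect the plan to stall in the absence of a fundamentally new geometric input, and it is the reason one is driven to impose extra structure such as the presence of a prescribed automorphism.
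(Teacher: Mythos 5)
This statement is the Morton--Silverman Dynamical Uniform Boundedness Conjecture. The paper states it as a conjecture and offers no proof; it is an open problem, unresolved even for the one-parameter family $z^2+c$ over $\bQ$ (Conjecture~\ref{conj:uniformquad}). So there is nothing in the paper to compare your argument against, and your proposal --- to its credit --- does not claim to be a proof: you explicitly flag Step~3 as the point where a fundamentally new geometric input is needed. That self-assessment is accurate, but the argument actually breaks down earlier than you indicate. In Step~1, the Morton--Silverman congruence method bounds the exact period of a $K$-rational periodic point in terms of the residue characteristics of the chosen primes of good reduction. As $\phi$ ranges over all degree-$d$ morphisms defined over $K$, the set of primes of bad reduction is not uniformly bounded --- already for $\phi_c(z)=z^2+c$ one may choose $c$ so that $\phi_c$ has bad reduction at every prime below any prescribed bound --- so your constant $N_0$ depends on $\phi$ and not only on $(d,N,D)$. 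The same defect undermines the preperiod bound in Step~2. This failure of uniformity in the good-reduction argument is precisely why the conjecture remains open even for quadratic polynomials over $\bQ$.

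Your description of the Step~3 obstruction (the need for Mazur-type control of the relative dynatomic varieties over the moduli space $\cM_d^N$) is a fair account of the state of the art, and your closing remark correctly anticipates the paper's strategy: it does not touch the general conjecture at all, but imposes the extra structure of an order-$d$ automorphism. Concretely, the paper restricts to $\psi_{d,a,b}(z)=z/(az^d+b)$ with $b$ a prime power up to sign; the automorphism $z\mapsto \zeta_d z$ permits the substitution $w=az^d+b$ in the dynatomic polynomials (Lemma~\ref{lemma:phibntilde}), whose extreme coefficients are then powers of $b$ up to sign (Lemma~\ref{lemma:cnitopbottom}), reducing everything to rational-root and Newton-polygon arguments. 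None of that machinery is available for a general $\phi$ on $\bP^N$, so the conjecture as stated should be treated as unproved, and your sketch should be presented as a survey of known partial techniques rather than as a proof outline.
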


There have been various approaches to deal with this conjecture.
For example, Looper \cite{looper} recently proved this conjecture for the family of polynomial maps on $\bP^1$, assuming a generalization of the $abc$-conjecture called the $abcd$-conjecture.
Without this assumption, this conjecture has not even been proved for the family of quadratic unicritical polynomial maps on $\bP^1(\bQ)$.

\begin{conjecture} \label{conj:uniformquad}
Let $\phi_c(z)=z^2+c$ be a quadratic polynomial map defined over $\bQ$.
Then there exists a constant $C$ such that for any $c\in \bQ$,
\[\# \PrePer\left (\phi_c, \bP^1(\bQ)\right) \le C.\]
\end{conjecture}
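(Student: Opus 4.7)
This conjecture is widely regarded as out of reach of current techniques, so what follows is an outline of the natural attack rather than a plan I expect to succeed completely. The standard strategy is to bound the number of rational periodic points and the lengths of preperiodic tails separately; Poonen has conjectured that the combined count is at most $9$. For each $n\ge 1$ the dynatomic modular curve $Y_1(n)$ parametrizes pairs $(c,x)$ with $x$ of exact period $n$ under $\phi_c(z)=z^2+c$, and its smooth projective completion $X_1(n)$ has cusps corresponding to degenerate dynamics. A uniform bound on achievable periods would follow from showing that $X_1(n)(\bQ)$ consists only of cusps for every sufficiently large $n$.

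My plan for the period portion is to extend the cases already known: Morton's treatment of $n=4$, Flynn--Poonen--Schaefer for $n=5$, and Stoll's conditional work for $n=6$. The method of choice is Chabauty--Coleman on a suitable quotient of $\textup{Jac}(X_1(n))$, supplemented by mod-$p$ information at small primes of good reduction of $\phi_c$; the genus of $X_1(n)$ grows quickly in $n$ while its Mordell--Weil rank appears, empirically, to lag behind, so for each individual $n$ there is reasonable hope of ruling out non-cuspidal rational points. For the preperiodic tail bound, I would combine a uniform canonical-height lower bound of Baker--Ingram type with an archimedean separation estimate for distinct preperiodic points in the spirit of Benedetto, forcing any preperiodic tail of unbounded length to violate $\hat h_{\phi_c}(x)\ge \varepsilon\, h(x)$.

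The key obstruction, and the reason this conjecture has resisted direct attack for three decades, is the passage from finitely many $n$ to \emph{all} $n\ge N_0$. Faltings's theorem yields finiteness of $X_1(n)(\bQ)$ as soon as the genus exceeds one, but ineffectively and with no uniform control as $n$ varies; ruling out non-cuspidal rational points uniformly in $n$ is morally a uniform Mordell--Lang statement, beyond currently available unconditional methods. A realistic incremental target consistent with the outline above is to settle one further period, perhaps $n=7$, by exhibiting a rank-$1$ quotient of $\textup{Jac}(X_1(7))$ amenable to explicit Chabauty, rather than the full conjecture in one stroke.
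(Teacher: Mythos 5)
This statement is a conjecture, not a theorem: the paper states it (as Conjecture~\ref{conj:uniformquad}), cites the known partial results, and never claims a proof. There is therefore no ``paper's own proof'' to compare against. You correctly recognized this and wisely declined to fabricate an argument; your survey of the state of the art (Morton for period $4$, Flynn--Poonen--Schaefer for period $5$, Stoll conditionally for period $6$, and Poonen's conditional bound of $9$) matches the paper's own summary, and your identification of the obstruction --- the ineffectivity and lack of uniformity in $n$ in Faltings's theorem, making the passage from finitely many $n$ to all $n$ the genuine gap --- is the standard and accurate diagnosis. One small correction of emphasis: the paper presents Poonen's result not as a conjecture that the count is at most $9$, but as a conditional theorem that \emph{if} periods $1,2,3$ are the only rational periods that occur, \emph{then} $C=9$ suffices. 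Your proposal is not a proof and does not pretend to be one; as a review of why the problem is hard and what a realistic incremental target looks like, it is sound.
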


Regarding this conjecture, there has been work to try to find the upper bound for the length of rational periodic cycles.
While there exist (infinitely many) $c$-values such that $\phi_c$ has a rational periodic cycle of length $1$, $2$, or $3$, there are no $\phi_c$ that have a rational periodic cycle of length $4$ \cite{morton} or $5$ \cite{fps}.
Stoll \cite{stoll} proved that there cannot be a rational periodic cycle of length $6$ as well, assuming the Birch and Swinnerton-Dyer conjecture for a specific abelian variety.
Based on these results, Poonen \cite{poonen} proved that if $1$, $2$, and $3$ are the only possible lengths of rational periodic cycles of $\phi_c$, then Conjecture \ref{conj:uniformquad} is true with $C=9$.

There has also been work on other families of rational maps.
In 2014, Levy-Manes-Thompson \cite{manes} studied a specific family of quadratic rational maps defined over a number field $K$,
\begin{equation} \label{eq:manesmap}
    \psi_{a,b}(z)= b \left(z+\frac{a}{z}\right),\quad a,b\in \bQ^*.
\end{equation}
Note that their parameters have been relabeled to be consistent with this paper.
For $K=\bQ$, they proved the following.

\begin{theorem}[Levy-Manes-Thompson, 2014, \cite{manes}]
\label{theorem:LMT2014}
Let $b\in\bQ^*$. Then any rational periodic cycle of $\psi_{a,1}$ or $\psi_{a,-1}$ has length at most $4$. Furthermore, for any $a \in \bQ^*$, we have \[\#\PrePer\left(\psi_{a,\pm 1}, \bP^1(\bQ) \right) \le 6.\]
\end{theorem}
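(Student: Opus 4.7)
My approach reduces the problem via the order-$2$ automorphism $\sigma(z)=-z$ of $\psi_{a,\pm 1}$. A direct computation gives $\psi_{a,b}(-z)=-\psi_{a,b}(z)$, hence $\psi_{a,b}\circ\sigma=\sigma\circ\psi_{a,b}$ for $b=\pm 1$, so one may take the quotient by $\sigma$. The squaring map $\pi:z\mapsto w=z^2$ realizes this quotient $\bP^1\to\bP^1/\sigma$, and both $\psi_{a,1}$ and $\psi_{a,-1}$ descend to the same map $\tilde\psi_a(w)=(w+a)^2/w$ (the sign disappears upon squaring). A further conjugation $v=w/a$ reduces this to the single, $a$-independent map $f(v)=(v+1)^2/v$ over $\bQ$. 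Thus the cycle-length problem for the family $\{\psi_{a,\pm 1}\}_{a\in\bQ^*}$ collapses to controlling the rational periodic cycles of one specific rational map.

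\textbf{Lifting dictionary.} A rational $n$-cycle $C\subset\bQ^*$ of $\psi_{a,\pm 1}$ pushes under $\pi$ to an $f$-orbit: if $\sigma(C)\ne C$, then $\pi(C)$ is an $n$-cycle of $f$, while if $\sigma(C)=C$ then $n$ must be even and $\pi(C)$ is an $(n/2)$-cycle of $f$. Consequently, a bound of $2$ on rational cycle lengths of $f$ yields the bound of $4$ on $\psi_{a,\pm 1}$, so it suffices to show $f$ has no rational cycle of length $\ge 3$. For small periods I compute the dynatomic polynomials of $f$ explicitly: $\Phi_1^f=2v+1$ (rational fixed point $v=-1/2$); $\Phi_2^f=2v^2+4v+1$, irreducible over $\bQ$; and $\Phi_3^f$ factors as $(3v^3+9v^2+6v+1)(v^3+5v^2+6v+1)$, whose cubic factors have no rational roots by the rational root theorem. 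For $\Phi_n^f$ with $n\ge 4$, I would combine reduction modulo a good prime (e.g.\ $p=3$ or $5$, at which $f$ has only fixed points) with Morton--Silverman-type constraints forcing any rational period into a short list, supplementing with genus computations on the dynatomic modular curves of $f$ and Faltings' theorem if necessary.

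\textbf{Preperiodic count.} With cycle lengths bounded by $4$, the preperiodic count is routine. Each point $z\in\bQ$ has at most two rational $\psi_{a,b}$-preimages (the roots of $bw^2-zw+ab=0$). The fixed point $\infty$ contributes the strict preimage $0$ since $\psi_{a,b}(0)=\infty$, and for $\psi_{a,1}$ there is at most one rational $2$-cycle $\{\pm\sqrt{-a/2}\}$, arising via Case II from the $f$-fixed point $v=-1/2$ exactly when $-a/2$ is a rational square; the analogous analysis for $\psi_{a,-1}$ yields up to three rational fixed points and no $2$-cycle. Tracking the rational preimage tree of these periodic points to bounded depth then gives $\#\PrePer(\psi_{a,\pm 1},\bP^1(\bQ))\le 6$.

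\textbf{Main obstacle.} The crux is the uniform bound on rational cycle lengths of $f(v)=(v+1)^2/v$. Small periods yield to direct dynatomic computation, but a uniform bound for all $n$ must contend with the parabolic fixed point of $f$ at $\infty$ (multiplier $1$), where standard good-reduction bounds degenerate; this likely forces a careful combination of reduction arguments at several primes and, for any resilient $n$, a genus argument on the relevant dynatomic modular curves.
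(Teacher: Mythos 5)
The paper does not prove Theorem \ref{theorem:LMT2014}; it is quoted from Levy--Manes--Thompson as background, so there is no internal proof to match your argument against. The closest the paper comes is its own Theorem \ref{thm:psia}, proved by an unrelated technique: the substitution $(x,y)=(aX^d+bY^d,Y^d)$ collapses the dynatomic polynomial to a polynomial $\widetilde\Phi^*_{d,b,n}$ whose extremal coefficients are $\pm b^{\nu_d(n)/(d-1)}$, so that the rational root theorem pins down $az_i^d+b$. Your quotient-by-$\sigma$ route is conceptually clean and genuinely different, and the setup is correct: $\psi_{a,\pm1}\circ\sigma=\sigma\circ\psi_{a,\pm1}$ with $\sigma(z)=-z$, the map descends through $w=z^2$ to $(w+a)^2/w$, and conjugating by $v=w/a$ does eliminate $a$ and yields the single map $f(v)=(v+1)^2/v$.

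The problem is that the crucial step is not carried out. You reduce the theorem to the statement that $f$ has no rational periodic point of exact period $\ge 3$ (in fact $\ge 2$ suffices and is what your dynatomic computation suggests), verify $n\le 3$ directly, and then for $n\ge 4$ merely describe a plan invoking ``Morton--Silverman-type constraints,'' ``genus computations,'' and ``Faltings' theorem if necessary.'' That is a sketch, not a proof, and your own ``Main obstacle'' paragraph acknowledges it. Concretely, the argument you are reaching for does close: $f$ has good reduction at every prime (the resultant of $(X+Y)^2$ and $XY$ is $1$), and mod $3$ the only periodic points of $\tilde f$ are $v=1$ (multiplier $f'(1)=0$, superattracting) and $\infty$ (multiplier $1$); mod $2$ the only one is $\infty$. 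A rational point of exact period $n>1$ cannot reduce to a superattracting fixed point, so mod $3$ it must reduce to $\infty$, forcing $n\in\{3^e:e\ge 1\}$, while mod $2$ it must also reduce to $\infty$, forcing $n\in\{2^e:e\ge 1\}$; these are incompatible, so $n=1$. This is the content you need and it is not in your write-up. There is also a small error in the lifting dictionary: a $\sigma$-invariant cycle need not have even length, since $\sigma$ can act trivially on it --- indeed $C=\{\infty\}$ is a $\sigma$-fixed $1$-cycle of $\psi_{a,\pm1}$ --- so that case must be added. Finally, the preperiodic bound $\le 6$ is asserted via ``tracking the rational preimage tree'' without the actual preimage computations; the paper's analogous step requires a genuine argument (Newton-polygon analysis in its more general theorem, or explicit rational-root checks of polynomials such as $w^d+w-1$ in the $b=\pm1$ case), and your proposal supplies none of it.
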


Manes \cite{manes2} also worked on these maps with arbitrary $b\in \bQ^*$ and proved the following uniform boundedness result.

\begin{theorem}[Manes, \cite{manes2}]
    For any $a,b \in \bQ^*$, if there is no rational periodic cycle of length greater than $4$, then \[\#\PrePer\left(\psi_{a,b}, \bP^1(\bQ) \right) \le 12.\]
\end{theorem}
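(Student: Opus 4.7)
The central tool is the involution $\sigma(z) = -z$, which belongs to $\Aut(\psi_{a,b})$ since $\psi_{a,b}(-z) = -\psi_{a,b}(z)$ and therefore commutes with $\psi_{a,b}$. Consequently $\sigma$ permutes the rational preperiodic set of $\psi_{a,b}$, and every such point is either $\sigma$-fixed (forcing $z \in \{0,\infty\}$) or lies in a pair $\{z,-z\}$. Equivalently, $\psi_{a,b}$ descends via $q(z)=z^2$ to the degree-$2$ map $\phi(w) = b^2(w+a)^2/w$ on $\bP^1/\sigma \simeq \bP^1$, and rational preperiodic points of $\psi_{a,b}$ correspond to rational preperiodic points of $\phi$ whose $q$-fiber contains a rational square. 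This descent, together with the $\sigma$-pairing, will drive every count.

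First I would enumerate the rational periodic points of period at most $4$. Direct computation shows $\infty$ is fixed and that the remaining fixed-point equation $(b-1)z^2 + ab = 0$ contributes at most two more rational points; a similar factorization of $\psi_{a,b}^2(z) - z$ produces at most two rational non-fixed $2$-periodic points, namely the roots of $(b+1)z^2 + ab = 0$. For periods $3$ and $4$, $\sigma$-symmetry forces rational cycles either to come in $\sigma$-paired copies or to be $\sigma$-invariant (impossible for odd length), and one bounds the number of rational roots of the corresponding dynatomic polynomials. On the tail side, $\psi_{a,b}^{-1}(\infty) = \{0,\infty\}$ so the only strict preimage of $\infty$ is $0$, and $\psi_{a,b}^{-1}(0) = \{\pm\sqrt{-a}\}$ is rational only when $-a$ is a rational square; each subsequent generation of rational preimages imposes a new rational-square condition, keeping every preimage tree shallow.

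Summing these contributions and performing a case analysis on which cycle lengths and preimage trees actually occur yields the bound $12$. The main obstacle is the coupling between the periodic and the tail counts: naively adding the worst-case maxima for each cycle length and for the preimage trees above them exceeds $12$, so one has to verify that periodic configurations realizing the maxima in several period classes are incompatible with long rational preimage trails. The hypothesis that no rational cycle has length greater than $4$ enters precisely here, both to bound the cycle count and, via the descent to $\phi$ and the square-class condition on $q$-fibers, to exclude the delicate combinations that would otherwise push the total above $12$.
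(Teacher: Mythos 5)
This statement is a theorem of Manes cited from \cite{manes2}; the paper under review does not give a proof of it, so there is no ``paper's own proof'' to compare against. What can be said is whether your sketch stands as a proof, and it does not yet.

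Your organizing idea---that $\sigma(z) = -z$ lies in $\Aut(\psi_{a,b})$, acts on $\PrePer(\psi_{a,b},\bP^1(\bQ))$, and has $\{0,\infty\}$ as its only fixed locus---is correct and is the right structural starting point; it is also in the same spirit as what the present paper does for its own Theorems \ref{thm:main1}--\ref{thm:main2} (exploiting the $\bZ/d\bZ$ automorphism via a change of variable and dynatomic polynomials). The elementary computations you record (the fixed-point equation $(b-1)z^2+ab=0$, the antipodal $2$-cycle equation $(b+1)z^2+ab=0$, $\psi_{a,b}^{-1}(\infty)=\{0,\infty\}$, $\psi_{a,b}^{-1}(0)=\{\pm\sqrt{-a}\}$, and the semi-conjugacy $q\circ\psi_{a,b} = \phi\circ q$ with $q(z)=z^2$, $\phi(w)=b^2(w+a)^2/w$) all check out.

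However, the places where the bound $12$ has to actually come from are precisely the places the proposal waves at rather than executes. Concretely: (i) for periods $3$ and $4$ you assert that ``one bounds the number of rational roots of the corresponding dynatomic polynomials,'' but that bound is exactly the content that needs proving, and $\sigma$-pairing alone does not cap the number of rational $3$- or $4$-cycles at any particular value without an argument about those polynomials; (ii) the claim that ``each subsequent generation of rational preimages imposes a new rational-square condition, keeping every preimage tree shallow'' is not a proof that the tree terminates after finitely many levels---a square condition can hold at every level, so you need a concrete equation-by-equation argument (compare the Newton-polygon analysis of equations like $bw^d+w-b=0$ in the present paper's final theorem) showing that the relevant equations have no rational solutions beyond a bounded depth; (iii) you only discuss the preimage tree over $\{0,\infty\}$, but strictly preperiodic points can hang off the nonzero fixed points and off any $2$-, $3$-, or $4$-cycle, and those tails are not addressed; (iv) the final combinatorial step (``performing a case analysis\dots\ yields the bound $12$'') is asserted, not done, and you yourself flag that the naive sum of worst cases exceeds $12$, so the incompatibility argument you mention is essential and currently absent. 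As written this is a plausible plan, not a proof.
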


The rational map $\psi_{a,b}$ has a nontrivial automorphism group over $\bC$. In particular, for any $a,b\in \bQ^*$, \[\bZ/2\bZ \hookrightarrow \Aut_{\bC}(\psi_{a,b}).\] It can be naturally extended to the family of rational maps of degree $d$, whose automorphism group contains a copy of $\bZ/d\bZ$. Miasnikov-Stout-Williams \cite{auto} proved that this family of rational maps is also a single-parameter family in the moduli space of rational maps of degree $d$.

Regarding this, we extend Theorem \ref{theorem:LMT2014} to rational maps with higher degrees and more choices of parameters. Below are our main results.

\begin{theorem} \label{thm:main1}
Let $d \ge 2$ and $p$ be a prime. Also, let \[\cP_0^+ := \bigl\{ \sigma p^e : \sigma \in \{\pm 1\},\ e \in \bZ_0^+ \bigr\}.\] Then any rational periodic cycle of $\psi_{d,a,b}:\bP^1 \to \bP^1$ defined as \[ \psi_{d,a,b} (z) = \frac{z}{az^d+b},\quad a\in \bQ^*,\ b\in \cP_0^+\] has length at most $2$.
\end{theorem}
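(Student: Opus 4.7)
I approach this by contradiction, assuming $\psi = \psi_{d,a,b}$ admits a rational periodic cycle $\{z_0,z_1,\ldots,z_{n-1}\}$ of exact period $n \ge 3$. Since $z=0$ is always a fixed point, all $z_i$ are nonzero. Setting $w_i := az_i^d + b$, the recursion $z_{i+1} = z_i/w_i$ telescopes around the cycle to yield
\[
\prod_{i=0}^{n-1} w_i \;=\; 1,
\]
so $\sum_i v_q(w_i) = 0$ at every prime $q$. After a preliminary conjugation $z \mapsto z/c$ (which replaces $a$ by $ac^d$ while preserving $b$ and the rationality of periodic points), I may assume $a \in \bZ$.

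The first step is to analyze the cycle at primes $q \ne p$. Since $v_q(b) = 0$ there, a case analysis on the sign of $v_q(az_i^d)$ relative to $0$, combined with $\sum v_q(w_i) = 0$, shows that $v_q(z_i)$ is independent of $i$ at every prime $q \ne p$. Consequently $z_i/z_0$ is a $q$-adic unit at all $q \ne p$, so
\[
z_i \;=\; \sigma_i\,p^{k_i}\,z_0, \qquad \sigma_i \in \{\pm 1\},\ k_i \in \bZ.
\]
Writing $A := az_0^d$, $b = sp^e$ with $s \in \{\pm 1\}$, and setting $\tau_i := \sigma_i\sigma_{i+1}$ and $\eta_i := \sigma_i^d$, substitution into $z_{i+1} = z_i/w_i$ produces the master identity
\[
\tau_i\,p^{k_i - k_{i+1}} \;=\; \eta_i\,A\,p^{dk_i} \;+\; s\,p^e.
\]

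The core of the proof is a $p$-adic analysis of this identity. Let $\alpha := v_p(A)$ and $A_0 := A/p^{\alpha}$, a $p$-adic unit. Each step falls into one of three cases depending on whether $\alpha + dk_i$ is greater than, less than, or equal to $e$. Comparing $p$-adic valuations and unit parts on both sides yields rigid constraints:
\begin{itemize}
\item If $\alpha + dk_i > e$, the identity forces $\eta_i A\,p^{dk_i - e} = \tau_i - s \in \{-2,0,2\}$, which is solvable only for $p = 2$, and then pins down $k_i$.
\item If $\alpha + dk_i < e$, the identity forces $\eta_i A_0 + sp^{e - \alpha - dk_i} = \tau_i$; a standard $S$-unit estimate using $|p^m - p^n| \ge p(p-1)$ for distinct positive integers $m, n$ forces all such steps to share the same $k_i$.
\item If $\alpha + dk_i = e$, the step $k_i = (e-\alpha)/d$ is pinned down, and the successor $k_{i+1}$ dictated by the exact valuation of $w_i$ lies outside the admissible range for re-entering either of the other two cases.
\end{itemize}
Piecing these constraints together shows that $(k_i)$ must be constant along the cycle, so $z_i \in \{\pm p^{k_0} z_0\}$, which forces $n \le 2$ and contradicts $n \ge 3$.

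The main obstacle is the $p$-adic case analysis, especially the $p = 2$ subcase where the $S$-unit bound is tight and the first case becomes admissible; careful tracking of the parity of $d$ is also required, since $\eta_i = \sigma_i^d$ varies with $i$ for odd $d$ but is identically $1$ for even $d$. A secondary difficulty lies in the first step, where showing that $v_q(z_i)$ is constant at every $q \ne p$ requires some care at primes dividing $a$.
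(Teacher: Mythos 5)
Your proposal takes a genuinely different route from the paper. Where the paper passes through dynatomic machinery --- constructing the reduced polynomial $\widetilde\Phi_{d,b,n}^*$ (Lemma~\ref{lemma:phibntilde}), computing that its leading and constant coefficients are $\pm b^{\nu_d(n)/(d-1)}$ (Lemma~\ref{lemma:cnitopbottom}), and invoking the rational root theorem to force $az_i^d+b \in \cP$ --- you instead observe that the cycle relation $z_{i+1}=z_i/w_i$ telescopes to $\prod_i w_i = 1$, then extract the same $S$-unit conclusion ($z_i/z_0 \in \{\pm p^k\}$, equivalently $w_i \in \cP$) by analyzing valuations directly. Both then finish by showing the consecutive ratios $z_{i+1}/z_i$ can only be $\pm 1$ or $\pm p^r$ with $r>0$, which forces $n\le 2$ by closure of the cycle; your ``master identity'' plays the role of the paper's Lemma~\ref{lemma:pplemma}, which is proved by a different device (treating $x=p$ as a root of a Laurent-polynomial identity and forcing its four terms to pair off). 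Your approach is attractively elementary in that it sidesteps dynatomic polynomials entirely and shows the real source of the constraint is the multiplicative relation around the cycle.

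That said, as written the proof is a sketch with real gaps. The first step --- constancy of $v_q(z_i)$ at $q\neq p$ --- is not automatic at primes $q \mid a$ after your normalization; one needs the dynamical observation that if $v_q(z_i) < -v_q(a)/d$ then $v_q(z_{i+1}) > -v_q(a)/d$ and from that point on the valuation is frozen (since $v_q(w_j)=0$), so the orbit can never return to $z_i$, contradicting periodicity. You assert this only as ``a case analysis shows.'' Second, the $p$-adic case analysis is incomplete in exactly the places the paper is careful: the interaction between your three cases (a cycle could mix them), the boundary case $\alpha+dk_i=e$ is dispatched with a vague ``lies outside the admissible range,'' and the $p=2$, small-$d$ exceptions --- which the paper isolates in a separate remark following Lemma~\ref{lemma:pplemma}, verifying they produce only fixed points or $2$-cycles --- are acknowledged but not resolved. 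Finally, the base case $e=0$ (i.e.\ $b=\pm1$, allowed since $b\in\cP_0^+$) changes the shape of your case split and deserves separate treatment, as the paper does via Theorem~\ref{thm:psia}. The skeleton is sound and could plausibly be completed, but in its current state it would not pass muster as a proof of Theorem~\ref{thm:main1}.
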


\begin{theorem} \label{thm:main2}
    Under the same assumptions on $\psi_{d,a,b}$ as in Theorem \ref{thm:main1}, \[\PrePer \Bigl( \psi_{d,a,b} , \bP^1(\bQ) \Bigr) \le 6.\]
\end{theorem}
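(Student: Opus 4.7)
The plan is to leverage Theorem \ref{thm:main1}---which restricts every rational periodic cycle of $\psi_{d,a,b}$ to length $1$ or $2$---and then to decompose $\PrePer(\psi_{d,a,b},\bP^1(\bQ))=F\sqcup T\sqcup S$ into the rational fixed points $F$, genuine $2$-periodic points $T$, and strictly preperiodic points $S$, bounding each piece separately so that $|F|+|T|+|S|\le 6$.

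For $F$, the equation $\psi_{d,a,b}(z)=z$ factors as $z(az^d+b-1)=0$, so $F=\{0\}\cup\{\beta\in\bQ:\beta^d=(1-b)/a\}$; since a nonzero rational has at most two rational $d$-th roots (only the pair $\pm\beta$, and only when $d$ is even), we get $|F|\le 3$. For $T$, substituting $w=az^d+b$ into $\psi^2(z)=z$ and canceling the fixed-point factor $(w-1)$ reduces the $2$-cycle condition to
\[
  b(w^{d-1}+1)=(1-b)(w+w^2+\cdots+w^{d-2}),
\]
a polynomial of degree $d-1$ in $w$. Rational $2$-cycles in $z$ are then recovered from rational roots $w\neq 1$ via $az^d=w-b$, picking up a factor of at most $2$ from the $\pm$-symmetry. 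The plan here is to re-run the $p$-adic valuation argument from the proof of Theorem \ref{thm:main1}, using $b=\sigma p^e$ (or a small auxiliary prime when $e=0$) to severely constrain $v_p(w)$ along any rational $2$-cycle, forcing all but very few useful rational roots to be excluded.

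For $S$, the structural observation is that $\psi_{d,a,b}(z)=0$ forces $z=0$, so $\infty$ is the unique rational preimage of $0$ outside $F$. Rational preimages of $\infty$ satisfy $az^d=-b$, contributing at most two; rational preimages of a fixed point $\beta\neq 0$ are roots of the degree-$d$ polynomial $a\beta z^d-z+b\beta=0$ (with $\beta$ itself as one root), and the remaining rational roots are bounded by another $p$-adic valuation analysis, as are rational preimages of $2$-cycle points. A small case analysis---by whether $\pm\beta\in\bQ$ and whether a rational $2$-cycle exists---then sums the above bounds in each configuration to yield $|F|+|T|+|S|\le 6$; the extremal case is exemplified by $d=2$, $a=1$, $b=-2$, where $F=\{0\}$, $T=\{1,-1\}$, $S=\{\infty,2,-2\}$. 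The main obstacle, in my view, is the $2$-cycle analysis: the polynomial above can a priori admit many rational roots in $w$, and the crux is pushing the $p$-adic machinery underlying Theorem \ref{thm:main1} hard enough to rule all but very few out.
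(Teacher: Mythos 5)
Your decomposition $\PrePer = F \sqcup T \sqcup S$ and the reduction of each piece to polynomial conditions in $w = az^d + b$ is close in spirit to the paper's approach, and your preimage computations are correct. But there is a genuine gap in the closing step, plus the technical core is left undeveloped.

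The serious gap is the final claim that a "small case analysis... sums the above bounds in each configuration to yield $|F|+|T|+|S|\le 6$." This is false as stated. When $d$ is even, the worst configuration has all of $\pm\alpha$ (nonzero fixed points, $a\alpha^d+b=1$), $\pm\beta$ (a $2$-cycle, $a\beta^d+b=-1$), and $\pm\gamma$ (preimages of $\infty$, $a\gamma^d+b=0$) rational, which together with $0$ and $\infty$ gives $8$ preperiodic points, not $6$. Nothing in your proposal rules this configuration out, and no amount of $p$-adic valuation bookkeeping on the individual preimage polynomials will do it, since each piece separately is consistent. The paper's resolution is a genuinely Diophantine input: $(\beta^d,\gamma^d,\alpha^d)$ takes the values $\bigl((-1-b)/a,\,-b/a,\,(1-b)/a\bigr)$, a nontrivial $3$-term arithmetic progression of $d$-th powers, which is impossible for $d\ge 3$ by Darmon--Merel. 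Your extremal example $d=2$, $a=1$, $b=-2$ is not representative of the difficulty: there $\alpha$ and $\gamma$ happen to be irrational, so you never meet the $8$-point configuration.

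Secondarily, the $p$-adic analysis for $T$ and $S$ is only a sketch where the paper does real work. For $T$, your degree-$(d-1)$ polynomial in $w$ has both extreme coefficients equal to $b$, so the rational root theorem gives $w = \pm p^k$ with $|k|\le e$, but you still need to eliminate everything except $w=-1$; the paper does this through Lemma~\ref{lemma:pplemma}, which analyzes $\psi(z_1)=z_2$ under $az_i^d+b\in\cP$ via cancellation patterns in a Laurent identity in $p$, forcing $z_2\in\{\pm z_1,\pm p^r z_1\}$ and hence (along a cycle) $z_2=-z_1$. For $S$, the paper does not stay in the $z$ variable with your degree-$d$ polynomial $a\beta z^d - z + b\beta$; it passes to $w$ to get $\cP^+$-sided polynomials such as $bw^d+w-b$ and $(b\mp 1)w^d+w-b$, whose $p$- and $q$-Newton polygons pin down the candidate valuations of $w$ and show there are no admissible rational roots. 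These arguments also quietly require $d\ge 3$ (the paper handles the $d=2$, $b=\pm 1$ case by citing Levy--Manes--Thompson), a restriction your proposal does not acknowledge.
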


To prove these results, we use the dynatomic polynomials associated to $\psi_{d,a,b}$. Due to the structure of its nontrivial automorphisms, we can reduce the dynatomic polynomial to a simpler form by a change of variable (Lemma \ref{lemma:phibntilde}). The resulting polynomial has integer coefficients, where in particular the leading coefficient and constant are prime powers up to sign. This leads us to prove Theorem \ref{thm:main1}. Finally, we investigate polynomials associated to possible preperiodic structures using Newton polygons to derive Theorem \ref{thm:main2}.

There is no known results for the $b$-values which are not prime powers. Also, due to some techniques used in the proofs, such as the rational root theorem, these theorems cannot be extended to other number fields. For example, for any given $d\ge 2$, there exist $a,b \in \bQ(\zeta_d)$ such that \[\#\PrePer \left(\psi_{d,a,b},\bP^1(\bQ(\zeta_d))\right) \ge 2d+2\] where $\zeta_d$ is a $d$-th root of unity. We hope to use other tools, such as heights of the coefficients of dynatomic polynomials, to find a more general bound.

\section{Main results}

Let $d \ge 2$ be an integer, and consider a rational map of the form
\begin{equation} \label{eq:psidehom}
\psi_{d,a,b}(z) = \frac{z}{az^d+b}
\end{equation}
with $a,b\in \bQ^*$.
Note its homogenized form
\begin{equation} \label{eq:psimap}
\psi_{d,a,b} \bigl([X,Y]\bigr) = [XY^{d-1},aX^d+bY^d],\quad a,b \in \bQ^*.
\end{equation}
This map has a $\bC$-automorphism group containing a copy of $\bZ/d\bZ$, generated by \[z\mapsto \zeta_d z\] where $\zeta_d$ is a primitive $d$-th root of unity.
Also, $\psi_{d,a,b}$ is $\PGL_2(\bC)$-conjugate to $\psi_{d,1,b}$ by the map
\[z \mapsto \frac{z}{\sqrt[d]{a}},\]
so $\psi_{d,a,b}$ is a twist of $\psi_{d,1,b}$.

For each $a,b\in \bQ^*$ we consider the homogeneous dynatomic polynomial \[\Phi_{d,a,b,n}^* := \Phi_{\psi_{d,a,b},n}^*(X,Y) = \prod_{m\mid n} \bigl(YF_m(X,Y)-XG_m(X,Y)\bigr)^{\mu(n/m)} \in (\bZ[a,b])[X,Y],\] where $F_n,G_n$ are defined recurrently by
\begin{equation*} \begin{aligned}
    F_0 & = X, & G_0 &= Y, \\
    F_n & = F_{n-1}G_{n-1}^{d-1}, & \hspace*{2em} G_n & = aF_{n-1}^d + bG_{n-1}^d.
\end{aligned} \end{equation*}

To simplify the situation, we consider a different dynamical system.
Define $\widetilde F_n ,\widetilde G_n \in (\bZ[b])(x,y)$ recurrently by
\begin{equation} \label{eq:tildefgrecurrence}
\begin{aligned}
    \widetilde F_0 & = y, &&& \widetilde G_0 & = x, \\
    \widetilde F_n & = \widetilde F_{n-1} \widetilde G_{n-1}^{d-1}, &&&\hspace*{2em} \widetilde G_n & = (x-by) \frac{\widetilde F_{n-1}^d}{y} + b \widetilde G_{n-1}^d.
\end{aligned}
\end{equation}
Then it follows from induction that $y \mid \widetilde F_n$ for all $n$, so $\widetilde F_n, \widetilde G_n \in (\bZ[b])[x,y]$.
Moreover, both $\widetilde F_n, \widetilde G_n$ are homogeneous in $x,y$ with degree $d^n$.

We construct a dynatomic-style polynomial that is related to the dynatomic polynomial corresponding to $\psi_{d,a,b}$ defined in \eqref{eq:psimap}.

\begin{lemma}
\label{lemma:phibntilde}
For $n \ge 1$, define
\begin{equation} \label{eq:newdyn}
    \widetilde \Phi_{d,b,n}^* (x,y) = \prod_{m\mid n} (\widetilde F_{m-1} - \widetilde G_{m-1})^{\mu(n/m)}.
\end{equation}
Then $\widetilde \Phi_{d,b,n}^* \in (\bZ[b])[x,y]$ satisfies
\begin{equation} \label{eq:dynrelation}
    \widetilde \Phi_{d,b,n}^*(aX^d+bY^d,Y^d) = \Phi_{d,a,b,n}^*(X,Y)
\end{equation}
for all $n>1$.
\end{lemma}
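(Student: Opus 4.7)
The plan is to relate $(\widetilde F_m, \widetilde G_m)$, after the substitution $(x,y)=(aX^d+bY^d,\,Y^d)$, to the original sequences $(F_m,G_m)$. I would prove by induction on $m\ge 0$ the key identities
\[
\widetilde F_m(aX^d+bY^d,\,Y^d) = \frac{Y\,F_{m+1}(X,Y)}{X}, \qquad \widetilde G_m(aX^d+bY^d,\,Y^d) = G_{m+1}(X,Y),
\]
where $F_{m+1}/X$ is a genuine polynomial because $X\mid F_k$ for every $k$ (from $F_0=X$ and $F_k=F_{k-1}G_{k-1}^{d-1}$). The base case $m=0$ is immediate. For the inductive step the $\widetilde F$ formula follows at once from $\widetilde F_m = \widetilde F_{m-1}\widetilde G_{m-1}^{d-1}$, while for $\widetilde G$ the substitution simplifies $x-by \mapsto aX^d$ and $\widetilde F_{m-1}^d/y \mapsto F_m^d/X^d$, so that $\widetilde G_m|_{\mathrm{subst}} = aF_m^d + bG_m^d = G_{m+1}$.

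Subtracting these identities gives $(\widetilde F_{m-1}-\widetilde G_{m-1})(aX^d+bY^d,\,Y^d) = (YF_m - XG_m)/X$ for $m\ge 1$. Plugging into the definition of $\widetilde\Phi^*_{d,b,n}$ and collecting powers of $X$:
\[
\widetilde\Phi^*_{d,b,n}(aX^d+bY^d,\,Y^d) = \prod_{m\mid n}\!\left(\frac{YF_m-XG_m}{X}\right)^{\!\mu(n/m)} = X^{-\sum_{m\mid n}\mu(n/m)}\,\Phi^*_{d,a,b,n}(X,Y).
\]
Since $\sum_{m\mid n}\mu(n/m)=0$ for $n>1$, the $X$-factor disappears and the claimed identity follows. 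This also explains the hypothesis $n>1$: for $n=1$ there is a residual factor of $X$, reflecting $\Phi^*_{d,a,b,1} = X\cdot(\widetilde F_0-\widetilde G_0)|_{\mathrm{subst}}$.

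The main obstacle will be confirming that $\widetilde\Phi^*_{d,b,n}$ genuinely lies in $\bZ[b][x,y]$, not merely in the fraction field $\bZ[b](x,y)$. I would argue this by induction on $n$, reducing via Möbius inversion to the divisibility $(\widetilde F_{m-1}-\widetilde G_{m-1}) \mid (\widetilde F_{n-1}-\widetilde G_{n-1})$ in $\bZ[b][x,y]$ whenever $m\mid n$. This can be established by viewing $r_n := \widetilde F_n/\widetilde G_n$ as the $n$-th iterate of $r_0 = y/x$ under the rational map $f(r) = yr/((x-by)r^d + by)$: the vanishing $\widetilde F_{m-1}=\widetilde G_{m-1}$ amounts to $r_{m-1}=1$, which forces $r_m = f(1) = y/x = r_0$, so by periodicity $r_{n-1}=r_{m-1}=1$ for every $m\mid n$. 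A standard multiplicity argument then promotes this set-theoretic containment of vanishing loci to algebraic divisibility in $\bQ(b)[x,y]$, and Gauss's lemma descends it to $\bZ[b][x,y]$. Combined with the evident coprimality of the factors $\widetilde\Phi^*_m$ for distinct $m\mid n$, Möbius inversion then yields polynomiality of $\widetilde\Phi^*_{d,b,n}$.
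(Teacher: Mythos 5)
Your proof of the substitution identity is exactly the paper's argument: the same pair of identities $\widetilde F_m(aX^d+bY^d,Y^d)=YF_{m+1}/X$ and $\widetilde G_m(aX^d+bY^d,Y^d)=G_{m+1}$ proved by the same induction, followed by the same collection of the $X$-factors and the observation that $\sum_{m\mid n}\mu(n/m)=0$ for $n>1$. The one place you go beyond the paper is the final paragraph on the polynomiality of $\widetilde\Phi^*_{d,b,n}$, which the paper asserts in the lemma statement but does not explicitly argue; your periodicity observation for $r_n=\widetilde F_n/\widetilde G_n$ is a nice way to see the divisibility $(\widetilde F_{m-1}-\widetilde G_{m-1})\mid(\widetilde F_{n-1}-\widetilde G_{n-1})$ at the level of vanishing loci, though the ``standard multiplicity argument'' promoting this to algebraic divisibility would need a squarefreeness (or separability) input to be airtight. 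A shorter route to polynomiality is to note that the ring map $\bZ[b][x,y]\to\bZ[a,b][X,Y]$ sending $x\mapsto aX^d+bY^d$, $y\mapsto Y^d$ is injective and takes coprime polynomials to coprime polynomials, so a reduced fraction $P/Q$ with $\deg Q>0$ could not map to the polynomial $\Phi^*_{d,a,b,n}$; but either way, the substance of the lemma is the identity you proved, and your treatment of it matches the paper.
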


\begin{proof}
We first prove that by induction that for all $n\ge 1$ we have
\begin{equation} \label{eq:tildefgrel}
\begin{aligned}
    \widetilde F_{n-1} (aX^d+bY^d,Y^d) &= \frac{YF_n(X,Y)}{X},\\
    \widetilde G_{n-1} (aX^d+bY^d,Y^d) &= G_n(X,Y).\\
    \end{aligned}
\end{equation}

\begin{itemize}

\item \framebox{$n=1$}\enspace Since
\[F_1 = XY^{d-1},\quad G_1 = aX^d+bY^d,\]
we have
\[\widetilde F_0 (aX^d + bY^d,Y^d) = Y^d = \frac{YF_1(X,Y)}{X}\]
and
\[\widetilde G_0 (aX^d + bY^d,Y^d) = aX^d + bY^d= G_1(X,Y).\]

\item \framebox{$n\Rightarrow n+1$}\enspace Assuming \eqref{eq:tildefgrel} for $n$, from \eqref{eq:tildefgrecurrence} we have
\begin{align*}
    \widetilde F_n (aX^d+bY^d,Y^d) &= \widetilde F_{n-1} (aX^d+bY^d,Y^d) \widetilde G_{n-1}(aX^d+bY^d,Y^d)^{d-1} \\
    &= \frac{YF_n(X,Y)}{X} G_n(X,Y)^{d-1} \\
    &= \frac{YF_{n+1}(X,Y)}{X}
\end{align*}
and
\begin{align*}
    \widetilde G_n(aX^d+bY^d,Y^d) &= aX^d \frac{\widetilde F_{n-1}(aX^d+bY^d,Y^d)^d}{Y^d} + b \widetilde G_{n-1} (aX^d+bY^d,Y^d)^d \\
    &= \frac{aX^d}{Y^d} \left(\frac{YF_n(X,Y)}{X}\right)^d + b G_n(X,Y)^d \\
    &= aF_n(X,Y)^d + bG_n(X,Y)^d = G_{n+1}(X,Y).
\end{align*}
\end{itemize}

Therefore, \eqref{eq:tildefgrel} holds for all $n \ge 1$. Now if $n>1$, it follows that
{\allowdisplaybreaks
\begin{align*}
    &\widetilde \Phi_{d,b,n}^* (aX^d + bY^d, Y^d ) \\*
    &= \prod_{m\mid n} \left( \widetilde F_{m-1} (aX^d+bY^d, Y^d) - \widetilde G_{m-1} (aX^d+ bY^d, Y^d) \right)^{\mu(n/m)} \\
    &= \prod_{m \mid n} \left(\frac{YF_m(X,Y)}{X} - G_m(X,Y)\right)^{\mu(n/m)} \\
    &= \prod_{m \mid n} \left( \frac{YF_m(X,Y)- XG_m(X,Y)}{X}\right)^{\mu(n/m)} \\*
    &= \Phi_{d,a,b,n}^* (X,Y).
\end{align*}
}
This completes the proof of Lemma~\ref{lemma:phibntilde}.
\end{proof}

By construction, the polynomial $\widetilde \Phi_{d,b,n}^*$ is homogeneous in $x,y$ with degree \[\nu_d(n):= \sum_{m\mid n} \mu\left(\frac{n}{m}\right) d^{m-1}.\]
Note that if $n>1$, then \[\nu_d(n) \equiv \sum_{m\mid n} \mu \left(\frac{n}{m}\right) = 0 \pmod {d-1},\] so \[\frac{\nu_d(n)}{d-1} \in \bZ.\]

Now we investigate the leading and final coefficients of $\widetilde \Phi_{d,b,n}^*$.

\begin{lemma}
\label{lemma:cnitopbottom}
Let $c_{n,i} \in \bZ[b]$ be the coefficients of $\widetilde \Phi_{d,b,n}^*$, i.e., \[\widetilde \Phi_{d,b,n}^* (x,y)= \sum_{i=0}^{\nu_d(n)} c_{n,i} x^i y^{\nu_d(n)-i}.\] Then for all $n\ge1$ we have \[c_{n,0}, c_{n,\nu_d(n)} \in \left\{\pm b^{\nu_d(n)/(d-1)}\right\}.\]
\end{lemma}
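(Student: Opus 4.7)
The plan is to use homogeneity: since $\widetilde\Phi_{d,b,n}^*$ is homogeneous of degree $\nu_d(n)$ in $x,y$, the coefficients of interest are the two corner evaluations $c_{n,\nu_d(n)} = \widetilde\Phi_{d,b,n}^*(1,0)$ and $c_{n,0} = \widetilde\Phi_{d,b,n}^*(0,1)$. I would reduce to computing the corresponding corner values of each factor $\widetilde F_{m-1} - \widetilde G_{m-1}$ in the defining product \eqref{eq:newdyn}, then assemble them via the Möbius exponents.

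The first step is to track divisibility in the recurrence \eqref{eq:tildefgrecurrence}. The excerpt already records $y\mid \widetilde F_n$ for all $n$; a parallel induction on $\widetilde F_n = \widetilde F_{n-1}\widetilde G_{n-1}^{d-1}$ using $\widetilde G_0 = x$ gives $x\mid \widetilde F_n$ for $n\ge 1$. Hence $\widetilde F_n(1,0) = 0$ for $n\ge 0$ and $\widetilde F_n(0,1) = 0$ for $n\ge 1$. With this in hand, I compute $\widetilde G_n$ at the corners inductively. Writing $\widetilde F_{n-1} = y\widetilde Q_{n-1}$, the quotient $\widetilde F_{n-1}^d/y$ equals $y^{d-1}\widetilde Q_{n-1}^d$, which vanishes at $y=0$ for $d\ge 2$. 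The recurrence therefore collapses to $\widetilde G_n(1,0) = b\,\widetilde G_{n-1}(1,0)^d$, which with $\widetilde G_0(1,0) = 1$ solves explicitly to $\widetilde G_n(1,0) = b^{(d^n-1)/(d-1)}$. The analogous evaluation at $(0,1)$ uses $\widetilde F_{n-1}(0,1) = 0$ for $n\ge 2$ to give $\widetilde G_n(0,1) = b\,\widetilde G_{n-1}(0,1)^d$; combined with the direct base case $\widetilde G_1(0,1) = -b$, this yields $\widetilde G_n(0,1) = \pm b^{(d^n-1)/(d-1)}$.

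For $n>1$, substituting into \eqref{eq:newdyn} and evaluating at $(1,0)$ then gives
\[ \widetilde\Phi_{d,b,n}^*(1,0) = \prod_{m\mid n}\bigl(-b^{(d^{m-1}-1)/(d-1)}\bigr)^{\mu(n/m)}, \]
with the $m=1$ contribution $-b^0 = -1$ absorbed into the same uniform formula. The exponent of $b$ simplifies via $\sum_{m\mid n}\mu(n/m) = 0$ to exactly $\nu_d(n)/(d-1)$, and the residual sign is a product of $\pm 1$'s. The evaluation at $(0,1)$ is identical in structure, except the $m=1$ factor $\widetilde F_0(0,1) - \widetilde G_0(0,1) = 1$ replaces $-1$, leaving the $b$-exponent and overall sign argument unchanged. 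The case $n=1$ reads off directly from $\widetilde\Phi_{d,b,1}^* = y - x$. The main subtlety is purely one of bookkeeping: the formal quotient $\widetilde F_{n-1}^d/y$ in the $\widetilde G_n$ recurrence, and the rational-function nature of the product-over-divisors definition of $\widetilde\Phi_{d,b,n}^*$, both require care at the corner evaluations, but the divisibilities from the first step guarantee that the $y=0$ evaluation kills the awkward term and that no factor in the product vanishes.
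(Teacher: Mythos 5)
Your proof is correct and takes essentially the same approach as the paper's: evaluating at the corners $(1,0)$ and $(0,1)$ is exactly what the paper does by working modulo $xy$ to isolate the pure $x^{d^k}$ and $y^{d^k}$ coefficients of $\widetilde F_k-\widetilde G_k$, and the same divisibilities ($y\mid\widetilde F_k$ always, $x\mid\widetilde F_k$ for $k\ge1$) kill the awkward $(x-by)\widetilde F_{k}^d/y$ term in both treatments, after which the same M\"obius exponent sum $\sum_{m\mid n}\mu(n/m)(d^{m-1}-1)/(d-1)=\nu_d(n)/(d-1)$ finishes the argument. The only cosmetic difference is that the paper tracks the combined quantity $\widetilde F_k-\widetilde G_k$ mod $xy$ while you evaluate $\widetilde F_k$ and $\widetilde G_k$ separately at the corners, which is a slightly cleaner bookkeeping of the same computation.
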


\begin{proof}
For $k\ge 0$, let \[\widetilde F_k - \widetilde G_k = \sum_{j=0}^{d^k} e_{k,j} x^j y^{d^k-j},\] where $e_{k,j} \in \bZ[b]$.
We prove by induction that $e_{k,0},e_{k,d^k} \in \bigl\{\pm b^{(d^k-1)/(d-1)}\bigr\}$ for all $k\ge 0$.

\begin{itemize}

\item \framebox{$k=0$}\enspace Since \[\widetilde F_0 - \widetilde G_0 = y-x,\] we have \[e_{0,0} = 1,\quad e_{0,1}=-1.\]

\item \framebox{$k=1$}\enspace Since \[\widetilde F_1 - \widetilde G_1 = by^d - xy^{d-1} + x^{d-1} y - bx^d,\] we have \[e_{1,0} = b,\quad e_{1,d}=-b.\]

\item \framebox{$k\Rightarrow k+1$, $k\ge 1$}\enspace It follows from \eqref{eq:tildefgrecurrence} that $xy$ divides both $F_k$ and $F_{k+1}$.
In particular,
\[\widetilde G_{k+1} = (x-by) \frac{\widetilde F_k^d}{y} + b \widetilde G_k^d \equiv b \widetilde G_k^d \pmod{xy},\] 
since $d\ge 2$.
Moreover,
\[e_{k,0} y^{d^k} + e_{k,d^k} x^{d^k} \equiv \widetilde F_k - \widetilde G_k \equiv -\widetilde G_k \pmod{xy},\]
and hence
\begin{align*}
    \widetilde F_{k+1} - \widetilde G_{k+1} &\equiv - \widetilde G_{k+1} \\
    &\equiv -b\widetilde G_k^d \\
    &\equiv -b \left(-e_{k,0} y^{d^k} - e_{k,d^k} x^{d^k}\right)^d \\
    & \equiv (-1)^{d+1} \left(be_{k,0}^d y^{d^{k+1}} + be_{k,d^k}^d x^{d^{k+1}}\right) \pmod{xy}.
\end{align*}
Therefore,
\[e_{k+1,0} = (-1)^{d+1} be_{k,0}^d \in \left\{\pm b^{(d^{k+1}-1)/(d-1)}\right\},\]
and similarly $e_{k+1,d^{k+1}} \in \bigl\{\pm b^{(d^{k+1}-1)/(d-1)}\bigr\}$.
\end{itemize}

It follows from \eqref{eq:newdyn} that
\[\sum_{m \mid n} \mu\left(\frac{n}{m}\right) \frac{d^{m-1}-1}{d-1} = \frac{1}{d-1} \sum_{m \mid n} \mu\left(\frac{n}{m}\right) (d^{m-1}-1) = \frac{\nu_d(n)}{d-1}, \]
which completes the proof of Lemma~\ref{lemma:cnitopbottom}.
\end{proof}




This leads us to prove a stronger result of \cite[Theorem 4.1]{manes}, where $b=\pm 1$.

\begin{theorem} \label{thm:psia}
Let $d\ge 2$ be an integer and $a \in \bQ^*$. Then the following holds.
\begin{enumerate}[label=(\alph*)]
    \item There is no fixed point of $\psi_{d,a,1}$ other than $z=0$.
    \item There are at most two (or three) fixed points of $\psi_{d,a,-1}$ including $z=0$ if $d$ is odd (or even, respectively).
    \item Any rational periodic cycle of $\psi_{d,a,1}$ has length $1$ or $2$. There is a rational periodic cycle of length $2$ only if $d$ is even, in which case there is at most one such cycle.
    \item There is no rational periodic cycle of $\psi_{d,a,-1}$ of length at least $2$.
\end{enumerate}
\end{theorem}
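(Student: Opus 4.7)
My strategy is to apply Lemmas \ref{lemma:phibntilde} and \ref{lemma:cnitopbottom} together with the rational root theorem. When $b = \pm 1$, the quantity $b^{\nu_d(n)/(d-1)}$ equals $\pm 1$, so by Lemma \ref{lemma:cnitopbottom} the dehomogenized polynomial $\widetilde \Phi^*_{d,b,n}(x,1) \in \bZ[x]$ has leading coefficient and constant term both in $\{\pm 1\}$; the rational root theorem then forces any rational root to be $x = \pm 1$. Combined with Lemma \ref{lemma:phibntilde}, this implies that for each $n \geq 2$, any rational affine periodic point $X$ of $\psi_{d,a,b}$ of exact period $n$ must satisfy $aX^d + b \in \{-1, 1\}$. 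The point at infinity $[1:0]$ maps under $\psi_{d,a,b}$ to $[0:1]$, so it is preperiodic but never periodic and can be handled separately.

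For parts (a) and (b), Lemma \ref{lemma:phibntilde} does not apply at $n = 1$, so I would directly compute $\Phi^*_{d,a,b,1}(X,Y) = YF_1 - XG_1 = -X\bigl(aX^d - (1-b)Y^d\bigr)$. Setting $Y = 1$, rational fixed points satisfy $X = 0$ or $aX^d = 1 - b$. For $b = 1$ this forces $X = 0$, giving (a). For $b = -1$ it yields $X = 0$ together with rational solutions of $X^d = 2/a$, which admits at most one rational solution when $d$ is odd and at most two when $d$ is even, proving (b).

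For (c) and (d), let $X \in \bQ$ be a rational periodic point of $\psi_{d,a,b}$ of exact period $n \geq 2$. By the opening paragraph $aX^d + b \in \{\pm 1\}$. For $\psi_{d,a,-1}$, this gives $aX^d \in \{0, 2\}$, and both values are fixed by part (b); hence no rational cycle of length $\geq 2$ exists, proving (d). For $\psi_{d,a,1}$, we have $aX^d \in \{0, -2\}$: the first gives the fixed point $X = 0$, while $aX^d = -2$ yields $\psi_{d,a,1}(X) = X/(-1) = -X$, so $-X$ is also a rational point of exact period $n$, and applying the same criterion to $-X$ requires $a(-X)^d + 1 \in \{\pm 1\}$. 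For $d$ odd this becomes $-aX^d + 1 = 3 \notin \{\pm 1\}$, a contradiction, so no rational cycle of length $\geq 2$ exists in this case. For $d$ even the condition is consistent, and the direct check $\psi^2(X) = \psi(-X) = -X/(-1) = X$ shows $X$ has exact period $2$; the associated 2-cycle is $\{X_0, -X_0\}$ with $X_0^d = -2/a$, and since at most two rational values $\pm X_0$ satisfy this, there is at most one such cycle. In every case, no rational cycle of length $\geq 3$ exists, completing (c).

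The proof is essentially a bookkeeping exercise once Lemmas \ref{lemma:phibntilde} and \ref{lemma:cnitopbottom} are in hand. The main subtleties are the separate treatment of the point at infinity and, in part (c), the use of the $z \mapsto -z$ symmetry, which is an automorphism of $\psi_{d,a,1}$ precisely when $d$ is even and is exactly what distinguishes the two parity cases.
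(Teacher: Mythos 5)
Your proposal is correct and follows essentially the same route as the paper: Lemma \ref{lemma:phibntilde} to convert periodic points into roots of $\widetilde\Phi^*_{d,b,n}$, Lemma \ref{lemma:cnitopbottom} to see that the extreme coefficients are $\pm 1$ when $b = \pm 1$, and the rational root theorem to force $az_i^d + b \in \{\pm 1\}$. The only cosmetic differences are that the paper handles (a)/(b) by directly solving the fixed-point equation $\tfrac{z}{az^d+b}=z$ rather than expanding $\Phi^*_{d,a,b,1}$, and in (c) the paper rules out odd $d$ by counting rational solutions of $z^d=-2/a$ rather than by checking the image $-z$ against the same criterion; both arguments are equivalent.
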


\begin{proof}
First, if $z$ is fixed by $\psi_{d,a,1}$, then \[\psi_{d,a,1} (z) = \frac{z}{az^d+1}=z,\] which is only possible if $z=0$. Therefore, $z=0$ is the only fixed point of $\psi_{d,a,1}$. On the other hand, \[\psi_{d,a,-1}(z)=\frac{z}{az^d-1},\] so the roots of $az^d-1=1$ are nonzero fixed points of $\psi_{d,a,-1}$. This proves (a) and (b).

Now suppose that $z_1,\ldots,z_n$ is a rational periodic cycle of $\psi_{d,a,1}$ of length $n \ge 2$.
Then $z_i\ne 0$ for each $i=1,\ldots,n$. Also, \[\Phi_{d,a,1,n}^*(z_i)=0\] by the definition of dynatomic polynomials, so \[\widetilde \Phi_{d,1,n}^*(az_i^d +1) = 0\] follows from Lemma \ref{lemma:phibntilde}.
Therefore, $az_i^d+1$ is a rational root of $\widetilde \Phi_{d,1,n}^* \in \bZ[z]$, whose leading coefficient and constant term have absolute value $1$ by Lemma \ref{lemma:cnitopbottom}.
Therefore, \[a z_i^d + 1 \in \{\pm 1\}\] for all $i$, which implies
\begin{equation} \label{eq:xiyi}
    z_i^d = -\frac{2}{a}
\end{equation}
since we assumed that $z_i\ne 0$.
If $d$ is odd, there is at most one $z_i\in \bQ$ satisfying \eqref{eq:xiyi}, which contradicts that $z_1,\ldots,z_n$ should be distinct.
Therefore, there is no rational periodic cycle of $\psi_{d,a,1}$ of length $n\ge 2$.
On the other hand, if $d$ is even, there are at most two $z_i \in \bQ$ satisfying \eqref{eq:xiyi}, which allows at most one rational periodic cycle of $\psi_{d,a,1}$ of length $2$. Note that for this cycle $z_1,z_2$, \eqref{eq:xiyi} implies \[az_1^d+1 = az_2^d + 1= -1,\] so $z_2 = -z_1$.

Similarly, if $z_1,\ldots,z_n$ is a rational periodic cycle of $\psi_{d,a,-1}$ of length $n\ge 2$, then \[az_i^d-1 \in \{\pm 1\}\] for all $i$, and consequently \[z_i^d = \frac{2}{a}.\] There are two roots, say $z_1,z_2$, if $d$ is even, but in this case \[az_1^d -1 = az_2^d -1 = 1,\] so $z_1=z_2$. Therefore, $\psi_{d,a,-1}$ cannot have any rational periodic cycle of length $2$ or longer.

\end{proof}

\begin{theorem}
    Let $d\ge 2$. Then for any $a\in \bQ^*$, \[\#\PrePer(\psi_{d,a,\pm 1}, \bP^1(\bQ))\le 6.\]

    \begin{proof}
        We first consider $\psi_{d,a,1}$. \cite{manes} covered the $d=2$ case, so we may assume that $d\ge 3$. Also, Theorem \ref{thm:psia} says that there are at most one periodic point $z=0$ if $d$ is odd, and at most three periodic point, $z=0$ and $z=\pm \alpha$ where $\alpha$ and $-\alpha$ form a periodic cycle of length $2$, if $d$ is even. It follows that \[a\alpha^d+2=0.\]

        Now observe that the preimage of $z=0$ is $\{0,\infty\}$. If $z\in \bQ$ satisfies $\psi_{d,a,1}(z)=\infty$, then $az^d+1=0$. This equation has at most one rational root $\beta$ if $d$ is odd, and at most two rational roots $\pm \beta$ if $d$ is even. Next, if $z\in \bQ$ satisfies \[\psi_{d,a,1}(z) =\frac{z}{az^d+1}=  \beta,\] then \[\frac{az^d}{(az^d+1)^d} = a \beta^d=-1.\] Therefore, $w=az^d+1$ is a rational root of \[w^d+w-1 = 0,\] which is impossible. The same argument works for $-\beta$ when $d$ is even, so there are at most four preperiodic points which are eventually mapped to $z=0$.

        Similarly, if $d$ is even and $z\in \bQ$ satisfies \[\psi_{d,a,1}(z)=\frac{z}{az^d+1} =\alpha,\] then \[\frac{az^d}{(az^d+1)^d} = a\alpha^d = -2.\] It follows that $w=az^d+1$ is a rational root of \[2w^d+w-1=0,\] which is impossible (here we use the assumption $d\ge 3$). Therefore, there is no rational strictly preperiodic points attached to the periodic cycle of length $2$. To sum up, there are at most six preperiodic points of $\psi_{d,a,1}$.

        For the other map $\psi_{d,a,-1}$, there are at most four preperiodic points in the backward orbit of $z=0$ by a similar argument. Now let $\gamma$ be a nonzero fixed point of $\psi_{d,a,-1}$, which satisfies $a\gamma^d-2=0.$ If $z\in \bQ$ satisfies \[\psi_{d,a,-1}(z)=\frac{z}{az^d-1}=\gamma\] then \[\frac{az^d}{(az^d-1)^d} = a\gamma^d = 2,\] so $w=az^d-1$ is a rational root of \[2w^d - w+1=0.\] This is impossible for any $d\ge 2$. Therefore, there are at most six preperiodic points as well.
    \end{proof}
\end{theorem}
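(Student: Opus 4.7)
The plan is to reduce counting preperiodic rational points to solving a short list of one-variable auxiliary polynomial equations via the rational root theorem. By Theorem~\ref{thm:psia}, the rational periodic points of $\psi_{d,a,1}$ are either $\{0\}$ (if $d$ is odd) or $\{0,\alpha,-\alpha\}$ with $a\alpha^d=-2$ (if $d$ is even), and those of $\psi_{d,a,-1}$ are $\{0\}$ together with the rational roots of $az^d=2$, so in every case there are at most three rational periodic points. Since the $d=2$ case is already handled in \cite{manes}, I would assume $d\ge 3$, and it suffices to produce at most three strictly preperiodic rational points in each case.

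The key device is the substitution $w=az^d\pm 1$ (the denominator of $\psi_{d,a,\pm 1}$). Writing $\psi_{d,a,\pm 1}(z)=c$ with $c\ne 0$ as $z=cw$ and substituting back into $w=az^d\pm 1$ yields the auxiliary polynomial equation $ac^d w^d - w\pm 1 = 0$. When $c$ ranges over the periodic points $\pm\alpha$, $\pm\gamma$ and the intermediate preimages $\pm\beta$ (with $a\beta^d=-1$) or $\pm\delta$ (with $a\delta^d=1$), the coefficient $ac^d$ equals $\pm 1$ or $\pm 2$. Hence the rational root theorem restricts $w$ to $\{\pm 1,\pm 1/2\}$, and a direct check on each of the four equations $w^d+w-1=0$, $w^d-w-1=0$, $2w^d+w-1=0$, and $2w^d-w-1=0$ shows that for $d\ge 3$ every rational root either fails to exist or traces back to a $z$-value already in the periodic cycle.

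Assembling the count for $\psi_{d,a,1}$: the preimage of $0$ contributes only the point at infinity; the preimages of $\infty$ contribute at most one point $\beta$ (if $d$ is odd) or two points $\pm\beta$ (if $d$ is even); and the auxiliary equations rule out any further rational preimages of $\beta$, $-\beta$, $\alpha$, or $-\alpha$ beyond the points already counted. The total is at most $3+1+2=6$. The parallel argument for $\psi_{d,a,-1}$, walking backward from $0$ through $\infty$ to $\pm\delta$ and independently from each nonzero fixed point $\pm\gamma$, gives the same bound.

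The main obstacle is bookkeeping: some rational roots of the auxiliary polynomials correspond to $z$-values already in the periodic cycle (for instance $w=-1$ yields $z=\pm\alpha$ among the preimages of $\alpha$ when $d$ is even), and these must not be counted as new preperiodic points. The hypothesis $d\ge 3$ enters precisely to eliminate the candidates $w=\pm 1/2$, which can be genuine rational roots when $d=2$ but fail every one of the auxiliary equations once $d\ge 3$.
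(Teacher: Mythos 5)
Your proof is correct and follows essentially the same route as the paper: the substitution $w=az^d\pm 1$ converts each preimage condition into one of the four auxiliary polynomial equations, and the rational root theorem together with a short check of $w\in\{\pm 1,\pm\tfrac12\}$ finishes the job. In fact your bookkeeping is more careful than the paper's: you correctly observe that rational roots such as $w=-1$ of $2w^d+w-1=0$ (for $d$ even) and $w=1$ of $2w^d-w-1=0$ do exist but trace back to $z$-values already in the periodic cycle, whereas the paper flatly asserts those equations have no rational roots, and moreover the paper's auxiliary equation for the preimage of the nonzero fixed point of $\psi_{d,a,-1}$ carries a sign error ($2w^d-w+1=0$ should read $2w^d-w-1=0$, which you have right).
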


Now we use Lemma \ref{lemma:cnitopbottom} to expand this result to $b$-values which are prime powers.

\begin{definition}
For a fixed prime $p$, let \begin{align*} \cP &:= \bigl\{ \sigma p^e : \sigma \in \{\pm 1\},\ e \in \bZ \bigr\}, \\ \cP_0^+ &:= \bigl\{ \sigma p^e : \sigma \in \{\pm 1\},\ e \in \bZ_0^+ \bigr\}, \\ \cP^+ &:= \bigl\{ \sigma p^e : \sigma \in \{\pm 1\},\ e \in \bZ^+ \bigr\}.\end{align*} We say that a polynomial $f(x)\in \bZ[x]$ is \emph{$\cP^+$-sided} if the leading coefficient and constant term of $f$ are in $\cP^+$.
\end{definition}

\begin{lemma} \label{lemma:pplemma}
Fix a prime $p$, $d \ge 3$, $a\in \bQ^*$ and $b\in \cP^+$. Suppose that $z_1 , z_2 \in \bQ$ satisfy $az_i^d+b \in \cP$ and $\psi_{d,a,b}(z_1) = z_2$. Then \[z_2 \in \{\pm z_1,\pm p^r z_1\}\] where \[r = \frac{\ord_p(b)}{d-1}.\] Moreover, $z_2\ne z_1$ is possible only if $d$ is even.

\begin{proof}
Let \[b=\sigma p^e \quad \text{and} \quad az_i^d+b = \sigma_i p^{k_i},\] where $\sigma,\sigma_i \in \{\pm 1\}$, $e\in \bZ^+$ and $k_i \in \bZ$. Then we have
\begin{align*}
    z_2 = \psi_{d,a,b}(z_1) = \frac{z_1}{az_1^d+b} \quad & \Longrightarrow \quad z_2 (az_1^d+b) = z_1 \\
    & \Longrightarrow \quad az_2^d (az_1^d+b)^d = az_1^d \\
    & \Longrightarrow \quad (\sigma_2 p^{k_2} - b) (\sigma_1 p^{k_1})^d = \sigma_1 p^{k_1} - b \\
    & \Longrightarrow \quad (\sigma_2 p^{k_2} - \sigma p^e ) (\sigma_1 p^{k_1})^d = \sigma_1 p^{k_1} - \sigma p^e \\
    & \Longrightarrow \quad (\sigma_1^d \sigma_2) p^{dk_1 + k_2 } - (\sigma \sigma_1^d) p^{dk_1 + e} - \sigma_1 p^{k_1} + \sigma p^e = 0.
\end{align*}
Therefore, $x=p$ is a root of a polynomial equation
\begin{equation} \label{eq:pequation}
f(x) := x^N\left((\sigma_1^d \sigma_2) x^{dk_1+k_2} - (\sigma \sigma_1^d) x^{dk_1+e} - \sigma_1 x^{k_1} + \sigma x^e \right) = 0
\end{equation}
for sufficiently large $N$ so that every exponent of $x$ is nonnegative.
We determine all possible combinations $(k_1,k_2,\sigma_1,\sigma_2)$ such that \eqref{eq:pequation} can have $x=p$ as a root.
If $p\ge 5$, \eqref{eq:pequation} cannot have $x=p$ as a root unless $f \equiv 0$ as a polynomial.
This can happen only if four terms in $f$ are cancelled in two pairs, which gives the following three cases. Note that $z_2 = \psi_{d,a,b} (z_1)$ implies \[\sigma_1 p^{k_1} z_2 = z_1 \quad \Longrightarrow \quad z_2 = \pm p^{-k_1} z_1.\]

\begin{enumerate}[label=\textbf{(\Alph*)}]
    \item If \[(\sigma_1^d \sigma_2) x^{dk_1+k_2} = (\sigma \sigma_1^d) x^{dk_1+e}\quad \text{and} \quad \sigma_1 x^{k_1} = \sigma x^e,\] then $k_1 =k_2 =e$ by comparing exponents and $\sigma_1 = \sigma_2 = \sigma$ by comparing coefficients.
    This implies that \[az_1^d + b = az_2^d + b = b\] so $z_1 = z_2 = 0$.
    
    \item If \[(\sigma_1^d \sigma_2) x^{dk_1+k_2} = \sigma_1 x^{k_1} \quad \text{and} \quad (\sigma \sigma_1^d) x^{dk_1+e} = \sigma x^e,\] then $k_1 = k_2 =0$ by comparing exponents and $\sigma_1 = \sigma_2$ by comparing coefficients. Then \[ az_1^d + b = az_2^d +b,\] so $z_2 = \pm z_1$ and $z_2 = -z_1$ is possible only if $d$ is even.
    
    \item If \[(\sigma_1^d \sigma_2) x^{dk_1+k_2} = -\sigma x^e \quad \text{and} \quad (\sigma \sigma_1^d) x^{dk_1+e}=-\sigma_1 x^{k_1},\] then $k_1 = -e/(d-1)=-r$ and $k_2 = (2d-1)e/(d-1)=(2d-1)r$ by comparing exponents and $\sigma = -\sigma_1^{d-1}$ and $\sigma_1 = \sigma_2$ by comparing coefficients. Then \[z_2 = \pm p^r z_1.\]
\end{enumerate}

If $p<5$, four terms in $f$ are cancelled in two pairs unless $(dk_1+k_2,dk_1+e,k_1,e)$ is a permutation of $(m+2,m+1,m,m)$ (when $p=2$) or $(m+1,m,m,m)$ (when $p=3$) for some $m\in \bZ$. However, since $d\ge 3$, it follows that $dk_1+e$ and $e$ should be equal, i.e., $k_1=0$, in any case. Therefore, $z_2 = \pm z_1$ as well.
\end{proof}
\end{lemma}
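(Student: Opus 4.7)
The plan is to exploit that both $b$ and the denominator $az_i^d+b$ are signed prime powers, which reduces the relation $\psi_{d,a,b}(z_1)=z_2$ to a four-term polynomial identity in $p$ that can be solved by elementary $p$-adic bookkeeping.

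I would first write $b=\sigma p^e$ with $\sigma\in\{\pm 1\}$, $e\ge 1$, and $az_i^d+b=\sigma_i p^{k_i}$ with $\sigma_i\in\{\pm 1\}$, $k_i\in\bZ$. Clearing the denominator in $z_2=z_1/(az_1^d+b)$ immediately gives $\sigma_1 p^{k_1}z_2 = z_1$, hence
\[ z_2 = \pm p^{-k_1} z_1. \]
The remaining task is therefore to show that $-k_1\in\{0,r\}$.

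Next I would substitute the expression for $z_2$ into $az_2^d+b=\sigma_2 p^{k_2}$, using $az_1^d=\sigma_1 p^{k_1}-\sigma p^e$, and multiply through by $(az_1^d+b)^d=\sigma_1^d p^{dk_1}$ to arrive at the identity
\[
F(x) := \sigma_1^d\sigma_2\, x^{dk_1+k_2} - \sigma\sigma_1^d\, x^{dk_1+e} - \sigma_1\, x^{k_1} + \sigma\, x^e = 0 \quad\text{at } x=p,
\]
after multiplying by a suitable $x^N$ to clear any negative exponents. All four coefficients lie in $\{\pm 1\}$. For $p\ge 5$, a straightforward valuation argument forces any such vanishing sum to split into two cancelling pairs of equal exponents, since otherwise the term of smallest exponent would survive reduction modulo an appropriate power of $p$. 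Enumerating the three possible pairings of the exponent multiset $\{dk_1+k_2,\,dk_1+e,\,k_1,\,e\}$ then yields exactly three cases: (A) $k_1=k_2=e$ and $\sigma_i=\sigma$, so $az_i^d=0$ and $z_1=z_2=0$; (B) $k_1=k_2=0$ with $\sigma_1^d=1$, so $z_2=\pm z_1$ where the negative sign requires $d$ even; (C) $k_1=-r$ and $k_2=(2d-1)r$, so $z_2=\pm p^r z_1$.

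The main obstacle is the small-prime cases $p\in\{2,3\}$, where four $\pm 1$-weighted prime powers can vanish without pair-cancellation, for instance via $2^{m+2}-2^{m+1}-2^m-2^m=0$ or $3^{m+1}-3^m-3^m-3^m=0$. In any such degenerate vanishing the four exponents must span at most $2$ (for $p=2$) or at most $1$ (for $p=3$). Since the exponents $dk_1+e$ and $e$ differ by $dk_1$, and $d\ge 3$ forces $|dk_1|\ge 3$ whenever $k_1\ne 0$, the span constraint forces $k_1=0$, returning to case (B). This is precisely where the hypothesis $d\ge 3$ is used.
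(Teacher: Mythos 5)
Your proposal is correct and follows essentially the same route as the paper: write $b=\sigma p^e$ and $az_i^d+b=\sigma_i p^{k_i}$, derive the same four-term relation $\sigma_1^d\sigma_2\,p^{dk_1+k_2}-\sigma\sigma_1^d\,p^{dk_1+e}-\sigma_1\,p^{k_1}+\sigma\,p^e=0$, argue that for $p\ge 5$ the terms must cancel in two pairs of equal exponent, enumerate the three pairings to recover $k_1\in\{0,-r\}$ or $z_1=z_2=0$, and for $p\in\{2,3\}$ use the fact that $d\ge 3$ makes $|dk_1|\ge 3$ incompatible with the small spread of exponents allowed by a non-pair-cancelling $\pm$-sum of prime powers. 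The only surface difference is that you phrase the small-prime step as a span bound on the exponent multiset rather than listing the explicit degenerate patterns $(m+2,m+1,m,m)$ and $(m+1,m,m,m)$ as the paper does, but the conclusion $k_1=0$ is reached the same way.
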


\begin{remark}
    For $d=2$ and $p=2$, it may be possible that $k_1=\pm 1$. However, $r=\ord_p(b) = e$ in this case, so if $k_1 = -1$ and $e=1$ then \[z_2 = \pm p z_1 \in \{\pm z_1,\pm p^r z_1\}.\] Excluding such case, the only exception for Lemma \ref{lemma:pplemma} happens when \[(p,e,k_1,k_2,\sigma_1,\sigma_2) = (2,1,1,0,-\sigma,\sigma),\] so \[b=2 \quad \text{and} \quad az_1^2=-4,\ az_2^2 = -1\] or \[b=-2 \quad \text{and} \quad az_1^2=4,\ az_2^2 = 1.\] In the first case, \[\psi_{2,a,2} (z_2) = \frac{z_2}{az_2^2+2} = z_2,\] so $z_2$ is a fixed point of $\psi_{2,a,2}$. In the second case, \[\psi_{2,a,-2} (z_2) = \frac{z_2}{az_2^2-2} = -z_2,\] and \[\psi_{2,a,-2}(-z_2) = \frac{-z_2}{a(-z_2)^2-2} = z_2.\] Therefore, $z_2$ is a periodic point of exact period $2$. Note that $z_1$ and $z_2$ cannot be elements of a periodic cycle of $\psi_{2,a,b}$ in any case.
\end{remark}

\begin{theorem} \label{thm:cyclelength}
Fix a prime $p$, $d\ge 2$, $a \in \bQ^*$, and $b\in \cP_0^+$.
\begin{enumerate}[label=(\alph*)]
    \item If $d$ is odd, then every rational periodic point of $\psi_{d,a,b}$ is fixed.
    \item If $d$ is even, then every rational periodic cycle of $\psi_{d,a,b}$ has length at most $2$, and any rational periodic cycle of length $2$ should be of the form \[z \longleftrightarrow -z.\]
\end{enumerate}
\end{theorem}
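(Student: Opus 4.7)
The plan is to combine the three lemmas of this section into a product-around-the-cycle argument. The case $b \in \{\pm 1\}$ (that is, $e = 0$) is already covered by parts (c) and (d) of Theorem \ref{thm:psia}, so I may assume $b = \sigma p^e$ with $\sigma \in \{\pm 1\}$ and $e \ge 1$.

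Fix a rational periodic cycle $z_1, \ldots, z_n$ of $\psi_{d,a,b}$ with $n \ge 2$; since $z = 0$ is a fixed point, each $z_i$ is nonzero. Setting $w_i := a z_i^d + b \in \bQ$, the identity of Lemma \ref{lemma:phibntilde} gives $\widetilde\Phi^*_{d,b,n}(w_i, 1) = 0$. With $b = \sigma p^e$, Lemma \ref{lemma:cnitopbottom} shows that the one-variable polynomial $\widetilde\Phi^*_{d,b,n}(w, 1) \in \bZ[w]$ has both leading coefficient and constant term equal to $\pm p^{e\nu_d(n)/(d-1)}$, hence is $\cP^+$-sided. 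The rational root theorem then forces $w_i = \pm p^{k_i}$ for some $k_i \in \bZ$, so $a z_i^d + b \in \cP$ for every $i$.

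At this point Lemma \ref{lemma:pplemma} applies to each consecutive pair $(z_i, z_{i+1})$ (with $z_{n+1} = z_1$), yielding $z_{i+1} \in \{\pm z_i,\ \pm p^r z_i\}$ with $r = e/(d-1)$. For $d = 2$ the Remark following that lemma verifies that its single exceptional configuration cannot occur between two elements of a periodic cycle, so the conclusion persists in the $d=2$ case as well. Writing $z_{i+1} = \epsilon_i p^{\alpha_i} z_i$ with $\epsilon_i \in \{\pm 1\}$ and $\alpha_i \in \{0, r\}$, the cycle condition $z_{n+1} = z_1$ gives
\[
\Bigl(\prod_{i=1}^n \epsilon_i\Bigr) p^{\sum_i \alpha_i} = 1.
\]
Since $r > 0$ and each $\alpha_i \ge 0$, this forces $\alpha_i = 0$ for every $i$, so $z_{i+1} = \pm z_i$ and thus $\{z_1, \ldots, z_n\} \subseteq \{z_1, -z_1\}$.

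To conclude, I would invoke the last clause of Lemma \ref{lemma:pplemma}: the transition $z_{i+1} = -z_i$ requires $d$ to be even. For odd $d$, every transition collapses to $z_{i+1} = z_i$, contradicting $n \ge 2$, which proves (a). For even $d$, the containment $\{z_1,\ldots,z_n\} \subseteq \{z_1,-z_1\}$ forces $n \le 2$, and a length-two cycle necessarily has the form $z_1 \leftrightarrow -z_1$, which proves (b). The main subtlety I expect is the $d = 2$ case, where Lemma \ref{lemma:pplemma} as stated does not apply directly and one must appeal explicitly to the Remark to rule out its exceptional configuration for elements of a periodic cycle.
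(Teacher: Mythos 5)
Your proof is correct and takes essentially the same route as the paper: reduce to $b \in \cP^+$ via Theorem~\ref{thm:psia}, use Lemmas~\ref{lemma:phibntilde} and~\ref{lemma:cnitopbottom} to show $az_i^d+b \in \cP$, invoke Lemma~\ref{lemma:pplemma} (with the Remark handling $d=2$) to constrain each transition, and then close the cycle. The paper compresses the cycle-closure step into the single sentence "Since $r>0$, the only possibility is that $n=2$ and $z_2=-z_1$," whereas you make the product-around-the-cycle valuation argument explicit; your ordering---first forcing all $\alpha_i=0$ from the closure condition, and only then invoking the ``$d$ even'' clause for the residual $\pm 1$ transitions---is a small but genuine improvement in rigor, since it avoids leaning on the ``$d$ even'' part of Lemma~\ref{lemma:pplemma} for the $\pm p^r$ branch, where the lemma's own proof is less clearly conclusive.
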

\begin{proof}
We already proved for $b=\pm 1$ in Theorem \ref{thm:psia}, so we may assume that $b\in \cP^+$.

Suppose that $z_1,\ldots,z_n$ is a rational periodic cycle of length $n\ge 2$.
Using a similar argument as in Theorem \ref{thm:psia}, $az_i^d+b$ is a rational root of $\widetilde \Phi_{d,b,n}^*$, whose leading coefficient and constant term are one of \[\pm b^{\nu_d(n)/(d-1)} = \pm p^{e\nu_d(n)/(d-1)}.\]
Therefore,
\begin{equation} \label{eq:azi}
    az_i ^d + b = \sigma_i p^{k_i}
\end{equation}
for some $\sigma_i \in \{\pm 1\}$ and $k_i\in \bZ$ satisfying
\begin{equation} \label{eq:krange}
    |k_i|\le \frac{e\nu_d(n)}{d-1}.
\end{equation}

Then from Lemma \ref{lemma:pplemma}, $d$ should be even and \[z_2 \in \{-z_1,\pm p^r z_1\}.\] (Note that, from the remark after Lemma \ref{lemma:pplemma}, the exceptional case when $d=2$ does not happen.) Now repeat the same process with $z_{j+1} = \psi_{d,a,b}(z_{j})$ for $j=2,\ldots,n$ (let $z_{n+1}=z_1$), then \[z_{j+1} \in \{-z_j, \pm p^r z_j.\}.\] Since $r>0$, the only possibility is that $n=2$ and $z_2=-z_1$.

\end{proof}

We can further determine all possible rational preperiodic portraits. If $z$ is a fixed point, then $z=0$ or $z=\alpha$ with \[a\alpha^d+b=1.\] If $d$ is odd, there are at most one nonzero fixed point $\alpha\in \bQ$; if $d$ is even, there are at most two nonzero fixed points $\pm \alpha \in \bQ$.

If $d$ is even, there may be rational periodic cycles of length $2$. However, in the proof of Theorem \ref{thm:cyclelength}, we proved that any such periodic cycle should consist of $z$ and $-z$. Consequently, there are at most one rational periodic cycle of length $2$, where $z=\beta$ with \[a\beta^d+b = -1.\] As a result, we have the following diagram for all possible periodic cycles. Note that the dashed arrow implies that it is possible only if $d$ is even.
\begin{center}
    \begin{tikzcd}
        0 \arrow[loop right] & \alpha \arrow[loop right] & -\alpha \arrow[dashed,loop right] & \beta \arrow[dashed,bend right]{r} & -\beta \arrow[dashed,bend right]{l}
    \end{tikzcd}
\end{center}

It remains to investigate the strictly preperiodic points.

\begin{theorem}
Let $p$ be a prime, and $d\ge 3$. Then for any $a\in \bQ^*$ and $b\in \cP_0^+$, \[\#\PrePer(\psi_{d,a,b},\bP^1(\bQ))\le 6.\]

\begin{proof}
First, the preimage of $0$ is $\{0,\infty\}$, and $\psi_{d,a,b}(z)=\infty$ if and only if $az^d+b=0$. Therefore, there is at most one rational preimage of $\infty$, say $\gamma$, if $d$ is odd. If $d$ is even, there are at most two, which can be denoted by $\pm \gamma$. Therefore, we have the following diagram.
\begin{center}
    \begin{tikzcd}[row sep=0pt]
        \gamma \arrow{dr} \\
        &\infty \arrow{r} &0 \arrow[loop right] & \alpha \arrow[loop right] & -\alpha \arrow[dashed,loop right] & \beta \arrow[dashed,bend right]{r} & -\beta \arrow[dashed,bend right]{l} \\
        -\gamma \arrow[dashed]{ur}
    \end{tikzcd}
\end{center}

We now claim that there is no other rational preperiodic point. 

\begin{enumerate}[label=\textbf{(\Alph*)}]
    \item Let $z\in \bQ$ be satisfying $\psi_{d,a,b} (z)=\gamma$. Then \[a\left(\frac{z}{az^d+b}\right)^d + b = a\gamma^d+b=0,\] so \[az^d+b(az^d+b)^d = 0.\] Therefore, $w=az^d+b$ should be a rational root of \[bw^d+w-b=0.\] This is a $\cP^+$-sided polynomial, and its $p$-Newton polygon has two line segments with slope $-e$ and $e/(d-1)$, where $e=\ord_p(b)$, so $w=\pm p^e=\pm b$ or $w=\pm p^{-e/(d-1)}$. It is not hard to see that both are impossible, so there is no $z\in \bQ$ such that $\psi_{d,a,b}(z)=\gamma$. The same argument works for $-\gamma$ when $d$ is even.
    
    \item Let $z(\ne \pm \alpha)\in \bQ$ be satisfying $\psi_{d,a,b}(z)=\alpha$. Then \[a\left(\frac{z}{az^d+b}\right)^d + b = a\alpha^d+b=1,\] so \[az^d+(b-1)(az^d+b)^d=0.\] Therefore, $w=az^d+b$ should be a rational root of \[(b-1)w^d+w-b=0.\] $w\ne 1$ as $z\ne \pm \alpha$, so \[(b-1)w^{d-1} +(b-1)w^{d-2}+\cdots+(b-1)w+b=0.\] For any prime divisor $q$ of $b-1$, the $q$-Newton polygon of the previous polynomial consists of one line segment with slope $\ord_q(b-1)/(d-1)$, which implies \[\ord_q(w)=-\frac{\ord_q(b-1)}{d-1}\] for each $q$. Therefore, \[w = \pm \frac{p^k}{\sqrt[d-1]{|b-1|}}\] for some $0\le k\le e$. However, considering the $p$-Newton polygon of the same polynomial, it turns out that $k=0$ or $e$, and both are impossible. The same argument works for $-\alpha$ when $d$ is even.
    
    \item Let $z(\ne \pm \beta) \in \bQ$ be satisfying $\psi_{d,a,b}(z)=\beta$, when $d$ is even. Then \[a\left(\frac{z}{az^d+b}\right)^d + b = a\alpha^d+b=-1,\] so so \[az^d+(b+1)(az^d+b)^d=0.\] Therefore, $w=az^d+b$ should be a rational root of \[(b+1)w^d+w-b=0.\] $w\ne -1$ as $z\ne \pm \beta$, so \[(b+1)w^{d-1} -(b+1)w^{d-2}+\cdots+(b+1)w-b=0.\] Similarly as in the previous case, \[w = \pm \frac{p^k}{\sqrt[d-1]{|b+1|}}\] for $k=0$ or $e$, both of which are impossible.
\end{enumerate}

Finally, we point out that not all of $\alpha,\beta,\gamma$ can be rational when $d$ is even. This is because if $\alpha,\beta,\gamma \in \bQ$, then \[a\beta^d+b=1,\quad a\gamma^d+b=0,\quad a\alpha^d+b=1,\] so $(\beta^d,\gamma^d,\alpha^d)$ forms an arithmetic progression. By Merel-Darmon \cite{merel}, this is impossible when $d$ is even. Therefore, there exists at most $6$ rational preperiodic points of $\psi_{d,a,b}$.

\end{proof}
\end{theorem}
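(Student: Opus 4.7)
The plan is to enumerate all rational preperiodic points of $\psi_{d,a,b}$ by controlling the periodic portion via Theorem~\ref{thm:cyclelength} and then ruling out unexpected preimages via Newton polygon analysis applied to a small set of explicit polynomial equations.

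First I would list every candidate rational preperiodic point within distance one of a periodic cycle. By Theorem~\ref{thm:cyclelength} the rational periodic points lie among $0$, nonzero fixed points $\alpha$ with $a\alpha^d+b=1$ (together with $-\alpha$ when $d$ is even), and a possible $2$-cycle $\beta \leftrightarrow -\beta$ with $a\beta^d+b=-1$ (only when $d$ is even). The only non-zero rational preimage of $0$ is $\infty$, and the rational preimages of $\infty$ satisfy $az^d+b=0$, giving at most $\gamma$ (odd $d$) or $\pm\gamma$ (even $d$). This yields a tally of at most $4$ candidates when $d$ is odd and at most $8$ when $d$ is even.

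Next I would rule out strictly preperiodic preimages of $\pm\gamma,\pm\alpha,\pm\beta$ beyond those already listed. Writing $\psi_{d,a,b}(z)=P$ and substituting into the defining equation of $P$ gives, after clearing denominators, a polynomial equation in $w:=az^d+b$: namely $bw^d+w-b=0$ for $P=\pm\gamma$, $(b-1)w^d+w-b=0$ for $P=\pm\alpha$, and $(b+1)w^d+w-b=0$ for $P=\pm\beta$. The first equation is $\cP^+$-sided, so its $p$-Newton polygon has two line segments with slopes $-e$ and $e/(d-1)$ where $e=\ord_p(b)$; combined with the rational root theorem this leaves only $w=\pm b$ or $w=\pm p^{-e/(d-1)}$ as candidates, and a direct plug-in eliminates both. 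For the $\pm\alpha$ and $\pm\beta$ cases the trivial root $w=\pm 1$ records $z=\pm\alpha$ (resp. $\pm\beta$) itself and must be factored out; the quotient $Q(w)$ has constant term $\pm b=\pm p^e$ and leading coefficient $b\mp 1$. Then the $p$-Newton polygon of $Q$ confines $\ord_p(w)\in\{0,e\}$, while for each prime $q \mid b\mp 1$ the $q$-Newton polygon forces $\ord_q(w)=-\ord_q(b\mp 1)/(d-1)$; together with the rational root theorem this reduces the rational root candidates to $w=\pm p^k/\sqrt[d-1]{|b\mp 1|}$ for $k\in\{0,e\}$, each of which I would eliminate by substitution.

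Finally, I would close the gap between the tally $8$ and the claimed bound $6$ in the even-degree case by invoking the Darmon--Merel theorem \cite{merel}. If $\alpha,\beta,\gamma$ were all rational then $\alpha^d,\gamma^d,\beta^d$ would form a genuine three-term arithmetic progression of $d$-th powers with common difference $-1/a \neq 0$, which is forbidden for even $d\ge 4$; consequently at least one of the three pairs $\{\pm\alpha\},\{\pm\beta\},\{\pm\gamma\}$ is absent from $\bP^1(\bQ)$ and the bound drops to $6$. The main obstacle is the Newton polygon analysis for $P=\pm\alpha,\pm\beta$, where the two primes $p$ and $q \mid b\mp 1$ produce constraints that must be combined and where the final elimination of the surviving candidates $w=\pm p^k/\sqrt[d-1]{|b\mp 1|}$ requires a further valuation-theoretic check, especially in the subtle case where $|b\mp 1|$ happens to be a $(d-1)$-st power; the hypothesis $d\ge 3$ is used both here (to make the $p$-Newton polygon genuinely two-segmented for $Q$) and in the Darmon--Merel input.
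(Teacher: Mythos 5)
Your proposal follows essentially the same route as the paper's proof: use Theorem~\ref{thm:cyclelength} to pin down the possible periodic structure $\{0,\alpha,-\alpha,\beta\leftrightarrow-\beta\}$, observe $\psi^{-1}(0)=\{0,\infty\}$ and $\psi^{-1}(\infty)\subseteq\{\pm\gamma\}$, reduce each remaining preimage question to a rational-root question for $bw^d+w-b$, $(b-1)w^d+w-b$, $(b+1)w^d+w-b$ in $w=az^d+b$, factor out the known root $w=\pm1$ in the latter two, apply Newton polygons at $p$ and at primes $q\mid b\mp1$ to corner the candidates, and finally invoke Darmon--Merel to exclude the case that $\alpha,\beta,\gamma$ are simultaneously rational when $d$ is even. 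The decompositions, the auxiliary polynomials, the Newton-polygon slopes, and the Darmon--Merel step all match the paper verbatim. One small point where you go beyond the paper: you explicitly flag the need to handle the case where $|b\mp1|$ is a $(d-1)$-st power, which the paper dismisses with ``both are impossible'' without comment; this is a legitimate loose end that deserves the extra check you mention.
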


\bibliography{biblio.bib}
\bibliographystyle{alpha}










\end{document}